\newtheorem{theorem}{Theorem}[section] 
\newtheorem{proposition}[theorem]{Proposition}
\newtheorem{definition}[theorem]{Definition}   
\newtheorem{remark}[theorem]{Remark}
\newtheorem{example}[theorem]{Example}
\newcommand{\footremember}[2]{%
    \footnote{#2}
    \newcounter{#1}
    \setcounter{#1}{\value{footnote}}%
}
\date{}
\author{%
Juan M. Benito-Ostolaza\footremember{alley}{Institute for Advanced research in Business and Economics (INARBE) \& Dpto. Econom\'{i}a, Universidad P\'{u}blica de Navarra (UPNA). Campus Arrosad\'{\i}a, 31006.  Iru\~na-Pamplona, Navarre, Spain.} 
\and
  Mar\'ia Jes\'us Campi\'on \footremember{alleydd}{Institute for Advanced research in Business and Economics (INARBE)\&  Dpto. Estad\'istica, Inform\'atica y Matem\'aticas,  Universidad P\'{u}blica de Navarra (UPNA). Campus Arrosad\'{\i}a, 31006.  Iru\~na-Pamplona, Navarre, Spain.}
  \and 
   Asier Estevan\footremember{trailer}{Institute for Advanced Materials and Mathematics (INAMAT)\&  Dpto. Estad\'istica, Inform\'atica y Matem\'aticas,  Universidad P\'{u}blica de Navarra (UPNA). Campus Arrosad\'{\i}a, 31006.  Iru\~na-Pamplona, Navarre, Spain.    }
  }
\title{A mathematical approach to law and deal modelling: legislation and agreements} 
\begin{document}
\maketitle
\date{\today}
\begin{abstract}

This paper presents a new and original first approach to agreement situations as well as to regulations constructions. This is made by giving a mathematical formalization to the set of all possible deals or regulations, such that then, the proximity between them is defined by means of a premetric. Thanks to this mathematical structure that tries to capture the problematic of agreements and regulation modifications, now some questions related to game theory or law are reduced to mathematical optimization problems.  

\end{abstract}

\textbf{Keywords:} agreements, governance, pareto optimality.


\section{Motivation. The need of going further when making agreements} \label{s1}

The objective of legal systems around the world is to regulate 
people's coexistence (\cite{Mattei}). This objective gives a central role to the definition and measurement of the legal framework in everyday interactions, as well as to the way in which these norms regulate the behavior of a population. 
In this sense, over the last decades, a good deal of theoretical and empirical work has been devoted to the task of explaining the compositition of agreements and consensus in a lot of related areas, such as climate change (\cite{Ore, Walter}), goverments coalitions (\cite{Laver98, Warwick}), cooperation and coordination on social dilemmas (\cite{VanL2013}) among others.
Likewise, the evolution of large-scale cooperation in humans has been the focal point of several disciplines in the last few decades (\cite{Axelrod84, Bowless2011,  Boyd85}). It is argued that genetic relatedness can explain the emergence of cooperation (\cite{Hamilton64}). However, humans cooperate on vast scales, more substantially than their genetic relatedness allows. Some explanations for this phenomenon include punishment, direct and indirect reciprocity, group competition etc. (see, for instance, \cite{Henrich2001, Young2015, Yu2015}).
Furthermore, the socio-economic spectrum in which we live is replete with binding agreements and/or treaties between two or more countries, companies, political parties, etc.

In the international environment, in recent years, there has been a proliferation of free trade agreements, such as the European Union, the Trans-Pacific Partnership Treaty or the Transatlantic Trade and Investment Partnership, and even environmental treaties and protocols such as the Kyoto Protocol.  But, what makes some countries sign these treaties and others not?, and what makes countries comply with the agreements or not? Undoubtedly the regulation distance within the agreed agreements and what each participant is willing to accept or renounce makes the difference between signing and not signing. Most of the proposals to solve these differences are based on fragmenting the regulations imposed in the agreements to satisfy the greatest number of participants. \cite{Biermann09} note that this type of fragmentation of the legal framework promotes the multiplication of international commitments. But, the difference in the starting point of the participants in the treaties may or may not make the agreement a success. 

This situation of non-compliance does not only occur in international environments. Within a country's borders, undesirable situations can arise with respect to compliance with national regulations, even in public health situations. One of the great battles of modern jurists and politicians is how we should implement the enforcement of laws. Given an ultimate goal, should we implement all the laws or rules to achieve it simultaneously, or does a gradual sequencing in the introduction of the different rules cause the number of non-compliances to decrease? Take for example the recent COVID19 global pandemic case. All countries, even regions, have adopted different measures in both form and timing. All measures have been designed to contain the spread of the virus and/or to control space in hospitals to care for the sick. The way these measures have been implemented has varied widely, and it has been found that even in this pandemic situation, the measures adopted have been breached by many residents. Perhaps a gradual implementation of the different measures could have resulted in less non-compliance, thus achieving greater effectiveness of the measures adopted.

In this work, we show that phasing in the rules to be complied with can achieve the objective at a lower cost than if they were all implemented from the beginning. The higher or lower cost depends on the \emph{proximity} between rules, which is defined in the paper (mainly through the probability and `severity' of breaking a rule), and which also allows us to evaluate what kind of agreements or treaties can be signed by the participants, taking into account this proximity between the different rules or laws.

 Consequently, social scientists, practitioners and legal researchers have been concerned with developing the legal framework of people. Although much work has been done to develop the regulatory character of the legal system, we see that little has been done in terms of developing methods to measure the status of people's legal system. Therefore, we see a need to measure and quantify this legal environment and the distance between existing laws and agreements.
 The aim of this paper is, using a new information theory approach to compare and evaluate different legal measures, to derive some focal measures of the effect of the legal framework on individuals and populations.
 In addition, in this paper, we contribute to the literature by analyzing how the proximity between diferent positions and norms in agreements affects the evolution of consensus or adoption of the agreement.

The goal of the present work is to introduce a tool in order to measure the proximity or `distance' between games, regulations or deals. One of the main aims of this study  is to know how to change the rules of a game in order to achieve a more efficient equilibrium. There may be different manners or paths in order to change the law from an initial regulation to a final one.
Here we construct a tool that indicates which is the path when changing the law, that is, a tool that shows us how we have to make changes in the law in order to arrive to a better regulation or deal, but always minimizing the \emph{social resistence}  to the changes.
The second goal of the paper is to achieve the \emph{most preferred} deal in no-agreement situations. Again, for that purpose a  distance function between  deals is required.

The introduced technique allows us to achieve the closest agreement-point in case of blocking situation.

As far as we know, there is no previous knowledge on the subject, so this is the first mathematical approach to the problem posed.
\bigskip

The rest of the paper is organized as follows. Section \ref{preliminaries} introduces the mathematical tools and notation needed. Section  \ref{Sdistances}  gives a mathematical structure to the problematic of the agreements and regulations, so that then, Section~\ref{sapply} presents some applications through the introduced new approach.
A final section of conclusions ends the paper.

\section{Preliminaries: Mathematical tools}\label{preliminaries}

In the present section we include some mathematical concepts that shall be needed throughout the paper. These are mainly related to order structures and metric spaces.

 From now on $X$ will stand for a nonempty set.
 
  \begin{definition} \rm A binary relation $\mathcal{R}$ on $X$ is a subset of the cartesian product $X \times X$. Given two elements $x,y \in X$, we will use the standard notation $x \mathcal{R} y$ to express that the pair $(x,y)$ belongs to $\mathcal {R}$.

  Associated to a binary relation $\mathcal{R}$ on a set $X$, we consider its \emph{negation} \rm (respectively, its \emph{transpose}\rm) as the binary relation $\mathcal{R}^c$ (respectively, $\mathcal{R}^t$) on $X$ given by $(x,y) \in \mathcal{R}^c \iff (x,y) \notin \mathcal{R}$ for every $ x,y \in X$ (respectively, given by $(x,y) \in \mathcal{R}^t \iff (y,x) \in \mathcal{R}, \ $ for every $ x,y \in X)$. We also define the \emph{adjoint} \rm $\mathcal{R}^a$ of the given relation $\mathcal{R}$, as $\mathcal{R}^a = (\mathcal{R}^t)^c$. 

  A binary relation $\mathcal{R}$ defined on a set $X$ is said to be: 
  \begin{itemize}
  \item[(i)] \emph{reflexive} \rm if $x \mathcal{R} x$ holds for every $x \in X$,
  \item[(ii)] \emph{irreflexive} \rm if $\neg(x \mathcal{R} x)$ holds for every $x \in X$, 
  \item[(iii)] \emph{symmetric} \rm if $\mathcal{R}$ and $\mathcal{R}^t$ coincide,
  \item[(iv)] \emph{antisymmetric} \rm if $\mathcal{R} \cap \mathcal{R}^t \subseteq \Delta = \{ (x,x): x \in X \}$,
  \item[(v)] \emph{asymmetric} \rm if $\mathcal{R} \cap \mathcal{R}^t = \varnothing$,
  \item[(vi)] \emph{total} \rm if $\mathcal{R} \cup \mathcal{R}^t = X \times X$,
 \item[(vii)] \emph{transitive} \rm if $x \mathcal{R} y \wedge y \mathcal{R} z \Rightarrow x \mathcal{R} z $ for every $x,y,z \in X$.
 \end{itemize}
 \end{definition} 

 In the particular case of a nonempty set where some kind of \emph{ordering}   has been defined, the standard notation is different. We include it here for sake of completeness, and we will use it throughout the present manuscript.

   \begin{definition} \rm 
   A \emph{preorder} $\precsim$ on $X$ is a binary relation  which is reflexive and transitive. An antisymmetric preorder is said to be an \emph{order}. A \emph{total preorder} \rm $\precsim$ on a set $X$ is a preorder such that if $x,y \in X$ then $(x \precsim y) \vee (y \precsim x)$ holds. In the case of preorders, it is well known that the corresponding indifference is transitive, i.e., it is an equivalence relation. Given a preorder $\precsim$ on $X$, we shall denote by  $x\bowtie y$ when $x$ and $y$ are not comparable, i.e., when $\neg(x\precsim y)$ as  well as $\neg(y\precsim x)$. 
 \end{definition}

 \begin{definition}\rm
  Let $\prec$ denote an
asymmetric binary relation on $(X, \tau)$.   Given $a \in X$ the sets
$L_{\prec}(a) = \lbrace t \in X \ : \ t
\prec a \rbrace $ and $
U_{\prec}(a) = \lbrace t \in X \ :
 \ a \prec t \rbrace $ are called, respectively, the \emph{strict lower and upper contours}
 \rm of $a$ relative to $\prec$. We say that $\prec$ is  \emph{$\tau$-continuous} \rm (or just \emph{continuous})
if for each $a \in X$ the  sets $L_{\prec}(a)$ and $U_{\prec}(a)$
are $\tau$-open.

We will denote the \emph{order topology} generated by $\prec$ as $\tau_{\prec}$, and it is defined by means of the  subbasis provided by the lower and upper contour sets.

Let $\precsim$ denote a
reflexive binary relation on $(X, \tau)$.   Given $a \in X$ the sets
$L_{\precsim}(a) = \lbrace t \in X \ : \ t
\precsim a \rbrace $ and $
U_{\precsim}(a) = \lbrace t \in X \ :
 \ a \precsim t \rbrace $ are called, respectively, the \emph{weak lower and upper contours}
 \rm of $a$ relative to $\precsim$. We say that $\precsim$ is  \emph{ $\tau$-lower semicontinuous} \rm (\emph{$\tau$-upper semicontinuous})
if for each $a \in X$ the  sets $L_{\precsim}(a)$ (resp. $U_{\precsim}(a)$)
are $\tau$-closed.
\end{definition}

The concept of $\tau$-(semi)continuity is a key notion when dealing with continuous representations of, not only in the well-known case of total preorders (see \cite{BRME, de, Debr}), but also preorders, thus, it is a very common and usually necessary concept in the field of order structures. 



Now, we include some definitions related to metrics and their generalizations.


\begin{definition}\rm
A \emph{metric} on a set $X$ is a function $d\colon X\times X\to [0,+\infty)$ (also called \emph{distance})
satisfiying the following three properties:

$%
\begin{array}
[c]{l}%
\mathrm{(M1)}$\textrm{\ }$d(x,x)=0 \, \text{ for all }\, x\in X. \\

\mathrm{(M2)}$\textrm{\ }$d(x,y)=0 \Rightarrow x=y.\\
\mathrm{(M3)}$\textrm{\ }$d(x,y)=d(y,x).\\
\mathrm{(M4)}\text{\textrm{\ }}d(x,z)\leq d(x,y)+d(y,z).
\end{array}
\medskip$
\end{definition}

There are several generalizations of metrics (see \cite{top1, bana, top2}) relaxing the aforementioned axioms. 
In our context we shall use the following notions.

\begin{definition}\rm
 A nonnegative real-valued function $d$ on
$X\times X$ is a \textit{quasi-semi distance} if    for all $x,y,z\in X:\medskip$

$%
\begin{array}
[c]{l}%
\mathrm{(M1)}d(x,x)=0.\\
\mathrm{(M4)}\text{\textrm{\ }}d(x,z)\leq d(x,y)+d(y,z).
\end{array}
\medskip$

A \textit{premetric}  on a set $X$ is a nonnegative real-valued function $d$ on
$X\times X$ such that :\medskip

$
\begin{array}
[c]{l}%
\mathrm{(M1)}$\textrm{\ }$d(x,x)=0 \,\text{ for all } x\in X.\\
\end{array}
\medskip$

Given a function $ d: X\times X \rightarrow \mathbb{R}^{+}$
we will define $d^{-1}(x, y) = d(y, x)$ and
$d^s(x, y) = max\{d(x, y), d(y, x)\}$. Notice that, if $d$ is a quasi-semi distance, $d^{-1}$ is a quasi-semi distance and $d^s$ is a distance.
\end{definition}

The following proposition is well-known  and it warrants that any premetric defines a topology on the set. \cite{top1,bana,top2}

\begin{proposition}\label{Popenballs}
Let $d$ be a premetric on $X$. Then, $d$ induces a topology on $X$ by means of the following open balls, for any $r>0$ and any $x\in X$:
$$ B_r(x)=\{y\in X \colon d(x,y)<r\}. $$

\end{proposition}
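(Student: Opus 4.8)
The plan is to verify that the collection $\mathcal{B}=\{B_r(x): x\in X,\ r>0\}$ forms a basis for a topology on $X$, since it is a standard fact that a topology is then obtained by declaring a set to be open precisely when it is a union of members of $\mathcal{B}$. First I would check that $\mathcal{B}$ covers $X$: for any $x\in X$, the axiom (M1) gives $d(x,x)=0<r$ for every $r>0$, so $x\in B_r(x)$, and already $X=\bigcup_{x\in X}B_1(x)$.

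The second and only substantive requirement is the basis intersection property: given two balls $B_{r_1}(x_1)$ and $B_{r_2}(x_2)$ and a point $z$ in their intersection, I must produce some $s>0$ with $z\in B_s(z)\subseteq B_{r_1}(x_1)\cap B_{r_2}(x_2)$. The natural choice is $s=\min\{r_1-d(x_1,z),\, r_2-d(x_2,z)\}$, which is strictly positive exactly because $z$ lies in both balls. The point $z$ belongs to $B_s(z)$ by (M1) again. The inclusion $B_s(z)\subseteq B_{r_i}(x_i)$ is where one would normally invoke the triangle inequality: if $d(z,y)<s\le r_i-d(x_i,z)$ then $d(x_i,y)\le d(x_i,z)+d(z,y)<r_i$.

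Here lies the main obstacle, and it is worth being careful about: a premetric as defined in the excerpt satisfies only (M1), so neither the triangle inequality (M4) nor symmetry (M3) is available, and the naive argument above does not go through. The resolution is that the proposition does not claim the balls form a basis in the classical sense; it only claims they \emph{induce} a topology. One takes the topology $\tau_d$ generated by $\mathcal{B}$ as a subbasis (equivalently, the smallest topology containing all the balls, or the topology whose open sets are arbitrary unions of finite intersections of balls). With this reading the statement is immediate: any family of subsets of $X$ generates a topology in this way, and the only thing requiring the hypothesis on $d$ at all is the mild observation that each $x$ lies in every ball centered at it, so the balls are nonempty and genuinely cover $X$, making $\tau_d$ a sensible ``ball topology'' rather than a degenerate one.

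Thus the proof reduces to: (1) cite the general fact that any collection of subsets of $X$ is a subbasis for a unique topology; (2) note via (M1) that $x\in B_r(x)$ for all $r>0$, so these balls cover $X$; (3) conclude that $\tau_d$ is the desired induced topology, observing in passing that when $d$ additionally satisfies (M4) the balls form an honest basis by the min-radius argument sketched above, recovering the familiar metric-topology picture. I would present step (2) as the heart of the matter and flag explicitly that, absent (M4), "basis" must be weakened to "subbasis", referring the reader to \cite{top1,bana,top2} for the background on generalized metric topologies.
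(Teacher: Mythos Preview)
The paper does not actually supply a proof of this proposition; it labels the result ``well-known'' and defers entirely to the references \cite{top1,bana,top2}. So there is no in-paper argument to compare against, only the standard treatments in those sources.

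Your reasoning is correct and you have put your finger on the real issue: with only axiom (M1) at hand, the balls need not form a basis, and falling back to the subbasis generated by $\mathcal{B}$ is a perfectly legitimate way to read ``induces a topology by means of the open balls.'' Under that reading your proof is complete.

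It is worth knowing, though, that the construction one usually finds in the cited literature is slightly different from yours. There one declares $U\subseteq X$ open precisely when for every $x\in U$ there exists $r>0$ with $B_r(x)\subseteq U$. Checking the topology axioms uses only the trivial nesting $B_s(x)\subseteq B_r(x)$ for $s\le r$ to handle finite intersections (take $r=\min\{r_1,r_2\}$ for the \emph{same} center $x$, so no triangle inequality is needed), together with (M1) to see that $X$ is open. In this version the balls act as a neighborhood base at each point but are not themselves guaranteed to be open, so the resulting topology can be strictly coarser than the subbasis topology you describe. Both constructions are legitimate and both vindicate the proposition as stated; you might simply add a sentence noting which one you intend, and that they coincide once (M4) is available.
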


Finally, we include one statistical distance which  quantifies the proximity between two  probability distributions. There are several distances defined between probability distributions, in the present paper we shall work with a $f$-divergence function  known as \emph{Kullback Leibler divergence}. \cite{aman, norma, jon}



\begin{definition}\label{Dkl}\rm
Let $p$  and $q$ be two probability functions on the  same event space $\mathcal{F}$. The \emph{Kullback Leibler divergence} (also called \emph{relative entropy}) from $q$ to $p$ (or the \emph{relative entropy} from $q$ to $p$)
is defined by
$$ D(p||q)= \displaystyle \sum\limits_{x\in \mathcal{F}} p(x)\cdot \log (\frac{p(x)}{q(x)})=E_{p}\log(\frac{p(X)}{q(X)}).$$

Relative entropy is defined whenever condition 
 $$q(x)=0 \implies p( x ) = 0, \quad x\in X\qquad \hfill (1)$$
 is satisfied.

 The \emph{entropy} of $p$ is defined by
 $$H(p)=-\displaystyle \sum\limits_{x\in \mathcal{F}} p(x)\cdot \log(p(x)).$$
\end{definition}

\section{Comparing   regulations and societies}\label{Sdistances}
Through the present paper we introduce a tool that allows to quantify negotiations. It gives us a manner to objectify agreements from  a theoretical point of view, that could help for impartiality, in particular when there is an `oracle' or `institution' that want to achieve an agreement between the agents. This new technique allows us  to find the \emph{most preferred} deal in disagreement situations.
Furthermore, 
if we want to incorporate a regulation by adding rules at different times, this method gives us the best manner or order in wich the rules must be added so that they clash as little as possible with the custom of society (as it is shown in Example~\ref{Ecom}).  Thus, we may construct the best (or `less aggressive') path from a deal to a new one. This may be interesting too in order to keep the consensus.


For these purposes, we need to formalize some ideas by attributing them a mathematical definition. As far as we know, there is no previous knowledge on the subject, so this is the first mathematical approach to the problem posed.
\medskip

First, let $\Omega$ be a set of rules $\{R_i\}_{i\in I}$  called \emph{Law}, that is used  when defining a \emph{regulation} or \emph{legal framework} $G$, which is in fact a subset of $\Omega$.  Second, let  $\{A_n\}_{n=1}^N$  be a finite family of agents or players, these players will be the signers of the possible deal or the society that lives under the regulation $G$. The set of rules $G$, that is, the regulation,  will be the sample space $\Omega_G$ of a probability space,  where the events space is the $\sigma$-algebra $\mathcal{F}=\mathcal{P}(G)$, which is the set of subsets of rules of the regulation that could be broken.

  


If we change a rule, two  interesting   questions arise:

\begin{enumerate}
\item[$(Q1)$] What is the probability that the rule will be broken by an agent $A$? Or, what is the probability that the rule will be broken by $n$ players?
\item[$(Q2)$]  Which is the minimum punishment that must be imposed in order to avoid breaking the rule?   Or, in other words,  how much would have to be compensated in order to guaranty that the rule will be cumplied?
\end{enumerate}

 The answer  to question 2  would give us the \emph{individual cost or severity} of enforcing the rule, and with the answer  to question 1 we will be able to measure the  proximity between the regulation of the deal or game with the habits of the current society of players.

We assume that we have this information at our disposal. From an applied point of view, it may be estimated, or it may be deduced from historical data. In any case, from a theoretical point of view, we will need this information (or the corresponding estimation, or frequencies or approximation).

Furthemore, the answer to both questions may be related. We may think that if a rule is broken usually by plenty of players, then the individual cost or severity must be small (for example, if we are thinking on forbidden smoke somewhere), whereas if a rule is rarely broken (a murder, major tax fraud) then the individual cost is bigger. Hence, this individual cost or severity may be defined by means of the probability of being broken.


Now, we formalize the main concepts. 


\begin{definition}\rm
Let $\Omega$ be  a set of rules $\{R_i\}_{i\in I}$  called \emph{Law} and $A=\{A_n\}_{n=1}^N$ a finite set of players. A subset  $G=\{R_k\}_{k\in K\subseteq I}\subseteq \Omega$ is said to be a \emph{regulation} or \emph{legal framework} under the Law. 
For a given regulation $G$, $G$ itself is  a sample space $\Omega_G$, where the corresponding \emph{events space} is defined by the set of subsets $\mathcal{F}_G=\mathcal{P}(G)$. An \emph{event} $x\in \mathcal{F}_G $ is a set of rules that may be broken (each of one). Thus, 
for each $n\in \{1,2,..., N\}$, let
$P_n \colon \mathcal{F}_G\to [0,1]$ be the probability of player $A_n$ breaking a subset of rules $x\in \mathcal{F}_G $ (i.e., breaking each of the rules involved in $x$). 
%
%
%
 By means of these probabilities, we define the mean function $P\colon \mathcal{F}_G\to [0,1]$ of  \emph{expected ratio of  offenders} by
$P(x)=\frac{1}{N}\sum\limits_{n=1}^N  P_n(x)$. Thus, the expected number of offenders for a subset of rules in a regulation (that is, for an event) $x\in \mathcal{F}_G$ is $N\cdot P(x)$. Let $g\colon  \mathcal{F}_G\to \mathbb{R}$ the \emph{(individual) punishment} associated to  an event $x\in \mathcal{F}_G$ 
(i.e., associated to breaking each of the rules involved in $x$).

A \emph{legal game} (or \emph{l-game}, for short) $\Psi_G$ 
  is now a   probabilistic space $(\Omega_G,\mathcal{F}_G, P)$ associated to a legal framework
  $G$ (defined under  the Law $\Omega$) 
and a society or family of agents $A$. The idiosyncrasies of this society $A$ are reflected in the probability functions $P$. Thus, the same regulation applied on another society $B$ implies a different probability function $P'$. If it is also endowed with a punishment function $g$, then we say that it is a \emph{punished l-game}.

From now, $\Gamma$  denotes the set of all possible l-games that may be defined with a given  Law $\Omega$ on a society $A$.
Thus, $\Gamma=\{(G, \mathcal{F}_G, P_G)\, \colon \, G\subseteq \Omega \}$, and we shall call it by \emph{Game of games}. Therefore, when referring to $\Gamma$,  the Law and the society are  assumed to be  fixed.
We shall say that a rule  $R_i\in \Omega$   is \emph{titere} if $P(x)=0$, for any $x\in \mathcal{F}_G$ such that $R_i\in x$.

Then, the \emph{expected severity of a punished l-game} (per capita) is defined by 
$$E g(\Psi_G)=\displaystyle \sum\limits_{x\in \mathcal{F}_G}  P(x)\cdot g(x).$$
 Thus, the expected severity of the l-game is the sum of the products of the expected  ratio of offenders  and the individual cost or severety.

 The \emph{entropy} \cite{aman, norma, jon} of a (not necessarily punished) l-game is
 $$H(\Psi_G)=-\displaystyle \sum\limits_{x\in \mathcal{F}_G} P(x)\cdot \log(P(x)).$$ 
 Thus, the entropy may be interpreted too as the expected severity per capita when $g(x)=\log(\frac{1}{P(x)})$, for any $x\in \mathcal{F}_G$.
 
 By continuity, it is assumed that $0\cdot \log(0)=0$ and $0\cdot \log(\frac{1}{0})=0$.

\end{definition}
\begin{remark}\rm

\noindent$(1)$ Notice that for a given regulation $G\subseteq \Omega$, $\sum\limits_{x\in \mathcal{F}_G}  P_n(x)=1$ for each $n\in \{1,2,..., N\}$ and, hence, $\sum\limits_{x\in \mathcal{F}_G}  P(x)=1$.
Therefore, function $P$ is, in fact, a probability function. In the case where the signers are countries or a few players, $P$ may be defined by means of the individual's probability functions $P_n$ ($n=1,...,N$). On the other hand, for the case of regulations that are applied on societies (i.e. with thousands of persons  involved), funtion $P$ maybe directly estimated or defined by means of frequency data, without requiring the definitions of the individual's probability functions $P_n$ ($n=1,...,N$).
It is interesting that, for the first case aforementioned, this function $P$ may be defined with weights, such that the weigth of some players may be bigger than others regarding to the their importance in the game (for example, if we want to weigth a family of countries in a deal by means of their GDP, population, contamination, ...)

\medskip

\noindent$(2)$ The (individual) punishment $g(x)$ (for any $x\in\mathcal{F}_G$) may be assumed to be in $[0,1]$, although it can be defined on the   set of positive real numbers. It is assumed that $g(\emptyset)=0$, that is, there is no punishment when there are no offenders.

Notice that, if the individual punishment associated to breaking a family of rules is defined by means of the probability function $P$ as $ \log(\frac{1}{P(x)})$ (such that, the punishment also changes depending on the number of offenders), then the expected severity per capita is defined by the   entropy:



$$E g(\Psi_G)=\displaystyle \sum\limits_{x\in \mathcal{F}_G} P(x)\cdot \log(\frac{1}{P(x)}).$$

\end{remark}

\vspace{0.5cm}

From now on, we will forget the player's probability functions $P_n$ (for each $n=1,...,N$) and we will work just with the (weighted) mean $P$.

Notice that a legal game consists on a set of rules applied on a   society, the idiosyncrasy  of that society is reflected in the probability function $P$. 
Thus, the l-game arised from a regulation $G$ in a society $A$ may change just by changing the society, i.e., the consecuences of a regulation may differ from one society to another.
In this line,  we introduce the idea of \emph{distance} between societies  in the spirit of Kullback Leibler divergence. \cite{aman, norma, jon} 
It will give us the behavior difference between two societies which live under the same regulation. What it works in a society, may fail in another.

\begin{definition}\label{Dsocial}\rm
Let $\Psi_G$ and $\Phi_G$ be two legal games with the same regulation but with different societies $A$ and $B$, which are characterized by a different probability function, $p$ and $q$ respectively. The \emph{KL-social divergence} from $A$ to $B$ under $G$ (denoted by $D_G(A||B)$) is the Kullback Leibler divergence from $q$ to $p$, that is, 
$$ D_G(A||B)=D(p|| q) = \displaystyle \sum\limits_{x\in \mathcal{F}_G} p(x)\cdot \log (\frac{p(x)}{q(x)})=E_{p}\log(\frac{p(X)}{q(X)}).$$

\end{definition}
 
\begin{remark}
\noindent$(1)$
This mathemathical concept of \emph{social divergence} may be useful when trying to import/export a regulation from one country or region to another. A successful regulation may fail when it is applied in another society. Thus, it may be interesting to measure the proximity between those societies as regards to the regulation.  Hence, before trying to apply that new regulation, it may be important to minimize the distance between societies (as regards to the corresponding regulation) by means of an adequate punishment function, awareness campaigns, incentives, education, ...
 
\medskip

\noindent$(2)$
The idea of KL-social divergence has been defined by means of Kullback Leiber divergence, this explains the initials KL. However, as explained in Preliminaries, there are several divergences or distances that measure how different two probability distributions are from each other. Depending on that selection, we may define one social divergence or another. However, all these distance should satisfy some \emph{coherence} conditions in order to faithfully reproduce the problem in our mathematical model. 
\end{remark}

\bigskip

In the following lines we  present a distance function that allows us to measure the difference between l-games, in particular, between regulations that are applied on the same society. Before that, we collect a few conditions that should  be satisfied by such a distance. By means of these conditions, we want to traslate the main  properties of a negotiation or legislation to our mathematical model. 

\begin{definition}\rm\label{Dcoherence}

Let $\Omega$ be a Law and $A$ a society. Let $G$ be a regulation in $\Omega$ (thus, $G\subseteq \Omega$) and $R$ a rule in $\Omega\setminus G$. The l-games arised from $G$ and $G\cup \{R\}$ are denoted by $\Psi$ and $\Psi'$, respectively. Let $d$ be a distance function on the set of l-games $\Gamma$.  Then, the following \emph{coherence conditions} are defined:

\begin{enumerate}
\item[($C1$)] The bigger the severity of rule $R$, bigger the distance from $\Psi$ to $\Psi'$.

\item[($C2$)] The more likely it is that the rule $R$ will be breached, the greater the distance  from $\Psi$ to $\Psi'$.
\end{enumerate}
\end{definition}



\begin{definition}\rm\label{Ddis}

Let $\Psi_1$ and $\Psi_2$ be two punished l-games defined under a Law $\Omega$ associated to two regulations $G_1$ and $G_2$, respectively, with the corresponding probability functions $p_1$ and $p_2$ and the punishment functions $g_1$ and $g_2$, respectively.  
Then, the \emph{premetric from $\Psi_2$ to $\Psi_1$ } is defined by 
$$D(\Psi_1|| \Psi_2)=\displaystyle \sum\limits_{x\in \mathcal{F}(G_1)} p_1(x)\cdot |g_1(x)-g_2(x)|,$$
where $g_2(x)$ is 0 whenever $x\notin \mathcal{F}_{G_2}$. 
\end{definition}

\begin{remark}\rm

\noindent$(1)$
First, notice that the premetric function $d$ satisfies the coherence conditions introduced in Definition~\ref{Dcoherence}.
\medskip

\noindent$(2)$
With definition above, notice that $D(\Psi_1|| \{\emptyset\})=E_p g(\Psi_1)$, whereas $D( \{\emptyset\} || \Psi_1)=0$, for any l-game $\Psi_1$.
$D(\Psi_1||\Psi_2)$ may be read as the `social resistance' to pass from $\Psi_2$ to $\Psi_1$. 
Thus, the value $D(\Psi_1|| \{\emptyset\})=E_p g(\Psi_1)$ may be interpreted as the `severity', `cost' or `social resistence' of the regulation   $G_1$ when it is   implemented from scratch.  On the other hand, $D( \{\emptyset\} || \Psi_1)=0$ means that removing rules does not imply a `severity', `cost' or `social resistence'.  In order to impose an strictly positive value to $D( \{\emptyset\} || \Psi_1)$, the distance function can be symmetrized as suggested in the next point.
 
\medskip

\noindent$(3)$
This  premetric  fails to be symmetric, however, it can be easily symmetrized as follows:
$$ D^+(\Psi_1,\Psi_2)=D(\Psi_1||\Psi_2)+D(\Psi_2||\Psi_1).$$

Or, by means of the maximum:
$$ D^s(\Psi_1,\Psi_2)=\max\{D(\Psi_1||\Psi_2),D(\Psi_2||\Psi_1)\}.$$

\medskip

\noindent$(4)$
However, $D$ as well as $D^+$ and $D^s$ may fail to satisfy the triangle inequality. 
\end{remark}

\begin{theorem}
The distance function $D$ is a  premetric on $\Gamma$ and, hence, it defines  a topology $\tau_D$ on the set.

The distance functions $D^+$ and $D^s$ are symmetric premetrics on $\Gamma$  and, hence, each of them  defines a topology ($\tau_{D^+}$ and $\tau_{D^s}$, respectively) on the set.
\end{theorem}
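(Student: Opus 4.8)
The plan is to unpack the definition of premetric and check, for each of $D$, $D^{+}$ and $D^{s}$, only what that definition demands: nonnegativity, finiteness (so that the codomain really is $[0,+\infty)$), and axiom $\mathrm{(M1)}$; and, for $D^{+}$ and $D^{s}$, additionally axiom $\mathrm{(M3)}$. Once this is done, Proposition~\ref{Popenballs} is invoked verbatim to produce the induced topologies $\tau_{D}$, $\tau_{D^{+}}$, $\tau_{D^{s}}$, with open balls $B_{r}(\Psi)=\{\Phi\in\Gamma:\, D(\Psi\|\Phi)<r\}$ (and likewise for $D^{+},D^{s}$).

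First I would settle well-definedness of $D$. Given l-games $\Psi_{1},\Psi_{2}\in\Gamma$ with data $(G_{1},p_{1},g_{1})$ and $(G_{2},p_{2},g_{2})$, the expression $D(\Psi_{1}\|\Psi_{2})=\sum_{x\in\mathcal{F}(G_{1})}p_{1}(x)\,|g_{1}(x)-g_{2}(x)|$ is a sum over $\mathcal{F}(G_{1})$ of products of nonnegative numbers (using the convention from Definition~\ref{Ddis} that $g_{2}(x)=0$ when $x\notin\mathcal{F}_{G_{2}}$), so $D(\Psi_{1}\|\Psi_{2})\geq 0$; and since $p_{1}$ is a probability function, $\sum_{x}p_{1}(x)=1$, while the punishment functions are bounded (they may be taken with values in $[0,1]$, as noted after Definition~\ref{Ddis}, or more generally bounded by some $M$), hence $|g_{1}(x)-g_{2}(x)|\leq M$ and $0\leq D(\Psi_{1}\|\Psi_{2})\leq M<+\infty$. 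Thus $D\colon\Gamma\times\Gamma\to[0,+\infty)$ is genuinely real-valued, in contrast with the Kullback--Leibler divergence of Definition~\ref{Dkl}. Axiom $\mathrm{(M1)}$ is then immediate: taking $\Psi_{1}=\Psi_{2}=\Psi$ with data $(G,p,g)$ gives $D(\Psi\|\Psi)=\sum_{x\in\mathcal{F}_{G}}p(x)\,|g(x)-g(x)|=\sum_{x\in\mathcal{F}_{G}}p(x)\cdot 0=0$. Hence $D$ is a premetric, and Proposition~\ref{Popenballs} yields the topology $\tau_{D}$.

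For $D^{+}(\Psi_{1},\Psi_{2})=D(\Psi_{1}\|\Psi_{2})+D(\Psi_{2}\|\Psi_{1})$ and $D^{s}(\Psi_{1},\Psi_{2})=\max\{D(\Psi_{1}\|\Psi_{2}),D(\Psi_{2}\|\Psi_{1})\}$, nonnegativity and finiteness are inherited from $D$, since a sum (resp.\ maximum) of two elements of $[0,+\infty)$ again lies in $[0,+\infty)$. Axiom $\mathrm{(M1)}$ holds because $D^{+}(\Psi,\Psi)=D(\Psi\|\Psi)+D(\Psi\|\Psi)=0$ and $D^{s}(\Psi,\Psi)=\max\{0,0\}=0$. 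Symmetry $\mathrm{(M3)}$ is purely formal: both $a+b$ and $\max\{a,b\}$ are symmetric in $a,b$, so $D^{+}(\Psi_{1},\Psi_{2})=D(\Psi_{1}\|\Psi_{2})+D(\Psi_{2}\|\Psi_{1})=D^{+}(\Psi_{2},\Psi_{1})$ and likewise for $D^{s}$. Therefore $D^{+}$ and $D^{s}$ are symmetric premetrics, and Proposition~\ref{Popenballs} again supplies $\tau_{D^{+}}$ and $\tau_{D^{s}}$.

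I do not anticipate a real obstacle. The only points deserving attention are the bookkeeping convention $g_{2}(x)=0$ for $x\notin\mathcal{F}_{G_{2}}$ (which is what makes $D(\Psi_{1}\|\Psi_{2})$ meaningful when $G_{1}\neq G_{2}$, in particular when one regulation is empty, cf.\ the preceding remark on $D(\Psi_{1}\|\{\emptyset\})$ and $D(\{\emptyset\}\|\Psi_{1})$), and the verification that $D$ takes values in $[0,+\infty)$ rather than $[0,+\infty]$, which is exactly where boundedness of the punishment functions enters. No symmetry of $D$ itself, and no triangle inequality for any of the three functions, is claimed (the remark preceding the theorem already flags that these may fail), so nothing further is needed.
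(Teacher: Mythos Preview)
Your proposal is correct and follows the same approach as the paper, which simply states that it is straightforward to check the premetric axioms and then invokes Proposition~\ref{Popenballs}. You have merely spelled out that ``straightforward'' verification in detail (including the finiteness check via boundedness of $g$, which the paper does not address explicitly), so your argument subsumes the paper's one-line proof.
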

\begin{proof}
It is straighforward to check that $D$ is in fact a premetric and, therefore, $D^+$ and $D^s$ are symmetric premetrics on $\Gamma$. Hence, by Proposition~\ref{Popenballs}, they define a topology on $\Gamma$ by means of the corresponding open balls.
\end{proof}

Notice that, through the definitions included until this point, now our set of all possible regulations in a society under a Law (assuming that each regulation is endowed with the corresponding probability function related to the society) is a complete and weighted and directed graph of $2^{|\Omega|}=|\mathcal{P}(\Omega)|$ nodes, where each node is a l-game  and each edge $(\Psi_1,\Psi_2)$ is weighted by the value determined by the function $D(\Psi_2,\Psi_1)$. If $D$ is symmetric, then the graph can be considered undirected. Here, notice that not just any arbitrary combination of rules can make (legal) sense, thus, from an applied point of view, the number of possible regulations arised from the Law $\Omega$ may be smaller that $2^{|\Omega|}$. 

If necessary, we may use the \emph{path-distance} arised from the weighted graph. It is known that, if the graph is connected and undirected, then this path-distance is in fact a distance and, hence, it satisfies the triangle inequality. In the case of a directed and complete graph, then the path-distance fails to be symmetric and, hence, it is a quasi-metric. \cite{graph}

\begin{theorem}
Let $D$ be a quasi-premetric on $\Gamma$. Hence, $\Gamma$ 
is a complete and directional graph where the weigth of each edge $(v,u)$ corresponds to $D(u,v)$ (which may differ from $D(v,u)$ if $D$ fails to be symmetric). Then, \emph{path-distance} $D_p$ is a (quasi) metric  on $\Gamma$  where $D_p(u,v)$ is defined as  the shortest distance between them where distance is measured  along the edges of the graph. In particular, $D_p$ satisfies the triangle inequality.
\end{theorem}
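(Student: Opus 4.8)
The plan is to regard $\Gamma$ as a complete weighted digraph and to check directly that the induced shortest‑path function inherits from the quasi‑premetric $D$ all the axioms required of a (quasi‑)metric. First I would fix notation: for $u,v\in\Gamma$, a \emph{path} from $u$ to $v$ is a finite sequence of vertices $u=x_0,x_1,\dots,x_k=v$, and its \emph{length} $\ell(x_0,\dots,x_k)$ is the sum of the weights of the traversed edges, where the edge from $x_i$ to $x_{i+1}$ carries the weight $D(x_{i+1},x_i)$ according to the convention $(v,u)\mapsto D(u,v)$. Since the graph is complete, the one‑edge path is always available, so the set $\mathcal{L}(u,v)$ of lengths of paths from $u$ to $v$ is nonempty; and since $D$ is nonnegative and real‑valued, every member of $\mathcal{L}(u,v)$ is a nonnegative real. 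I then set
$$D_p(u,v)=\inf \mathcal{L}(u,v)\in[0,\infty),$$
which is nonnegative and finite by construction.

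Next I would verify (M1): the length‑zero path $x_0=u$ (equivalently the loop at $u$, of weight $D(u,u)=0$ by (M1) for $D$) belongs to $\mathcal{L}(u,u)$, so $D_p(u,u)\le 0$, and together with non‑negativity this gives $D_p(u,u)=0$. The substance of the statement is the triangle inequality (M4). Fix $u,v,w\in\Gamma$. Concatenating any path $\pi_1$ from $u$ to $v$ with any path $\pi_2$ from $v$ to $w$ produces a path from $u$ to $w$ of length exactly $\ell(\pi_1)+\ell(\pi_2)$, so $D_p(u,w)\le \ell(\pi_1)+\ell(\pi_2)$ for all such $\pi_1,\pi_2$; since the two ranges are independent, taking infima over $\pi_1$ and over $\pi_2$ separately yields $D_p(u,w)\le D_p(u,v)+D_p(v,w)$. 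Note that this argument uses only the defining property of $D_p$ as an infimum, so it remains valid even if $\Gamma$ happens to be infinite.

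If moreover $D$ is symmetric, I would observe that reversing a path $x_0,\dots,x_k$ from $u$ to $v$ yields a path $x_k,\dots,x_0$ from $v$ to $u$ of equal length, because symmetry of $D$ forces the reversed edge to carry the same weight; taking infima then gives $D_p(u,v)=D_p(v,u)$, so in the symmetric case $D_p$ additionally satisfies (M3) and is a genuine (pseudo)metric. I would also point out that when $\Gamma$ is finite the infimum is in fact attained, so that $D_p$ is literally a \emph{shortest} distance and not merely an infimum: any non‑simple path contains a closed sub‑walk, whose deletion cannot increase the total length because all edge weights are nonnegative, so one may restrict attention to the finitely many simple paths and take a minimum over them.

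There is, I expect, no real obstacle in this proof. The one point that deserves care is keeping the edge‑orientation convention $(v,u)\mapsto D(u,v)$ consistent through the concatenation and reversal steps, since $D$ is not assumed symmetric. A second remark worth making is that the separation axiom (M2) need not pass to $D_p$ --- $D$ being only a quasi‑premetric may vanish off the diagonal --- which is exactly why the statement is phrased as ``(quasi) metric''; under the mild additional hypothesis that $D_p(u,v)=0$ implies $u=v$ one recovers a bona fide quasi‑metric, and a metric in the symmetric case.
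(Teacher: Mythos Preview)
Your argument is correct and complete: the verification of (M1) via the trivial path, the triangle inequality via concatenation of paths and independence of the two infima, the symmetry in the undirected case via path reversal, and the attainment of the infimum in the finite case via reduction to simple paths are all sound, and you are careful with the edge‑orientation convention throughout. Your closing caveat about (M2) is also apt.

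By way of comparison, the paper does not supply an in‑text proof of this theorem at all. The paragraph preceding the statement simply asserts that ``it is known'' that the path‑distance on a connected undirected weighted graph is a distance, and on a complete directed graph a quasi‑metric, citing Harary's \emph{Graph Theory}; the theorem is then stated as a summary of that remark, with no accompanying \texttt{proof} environment. So rather than differing in approach, your proposal fills a gap the paper leaves to the literature: you give the elementary self‑contained verification that the citation is meant to cover. The benefit of your write‑up is that it makes explicit which axioms of $D$ are actually used (only non‑negativity and $D(u,u)=0$), which clarifies why the conclusion is stated only as a \emph{quasi}‑metric.
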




 
\begin{remark}\rm
Again,  if the individual punishment associated to breaking a family of rules is defined by means of the probability (such that the punishment also changes depending on the number of offenders), then the \emph{distance from $\Psi_2$ to $\Psi_1$} maybe defined by means of the \emph{joint entropy}. \cite{aman, norma, jon}
That could be useful  too when comparing societies which are not under exactly the same regulation.
%
\end{remark}



\begin{definition}\label{Dsubgame}\rm

Let $\Omega$ be  a Law of a finite set of possible rules and $A=\{A_n\}_1^N$ a finite set of players. Let $G_1$ and $G_2$ be two regulations under the Law, and    $p_1$ and $p_2$ the corresponding probabilities (respectively) arised when they are applied in a society $A$. We denote by $\Psi_1$ and $\Psi_2$ the l-games $(G_1, \mathcal{F}_1, p_1)$ and $(G_2, \mathcal{F}_2, p_2)$, respectively.
 We shall say that $\Psi_2$ is a \emph{l-subgame} of $\Psi_1$ (and denote it by $\Psi_2 \sqsubseteq_{sub} \Psi_1$) if $G_1\subseteq G_2$ and any titere of $G_2$ is also a titere in $G_1$.  
 
 We denote by $sub(\Psi_1,\Psi_2) $ the set of l-subgames of $\Psi_1$ and $\Psi_2$ (it may be empty).
\end{definition} 

\begin{remark}
\noindent$(1)$
With definition above, notice that if we add a rule to a game, then we achieve a new subgame of the first one, i.e., when adding rules we define subgames.

\noindent$(2)$ When the Law $\Omega$ and the society $A$ are fixed, we extend the  notation $D(\Psi_1||\Psi_2)$ between games to the set of regulations $\mathcal{P}(\Omega)$ by $D(G_1||G_2)=D(\Psi_1||\Psi_2)$, where $\Psi_1$ and $\Psi_2$ are the corresponding l-games arised from $G_1$ and $G_2$ (respectively) when they are applied in the society $A$.
\end{remark}

\begin{proposition}
The relation $\sqsubseteq_{sub}$ of being a l-subgame is a preorder relation on $\Gamma$.
\end{proposition}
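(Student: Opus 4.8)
The plan is to verify directly that $\sqsubseteq_{sub}$ satisfies the two defining properties of a preorder, reflexivity and transitivity, working straight from Definition~\ref{Dsubgame}. No topological or probabilistic input is required: being an l-subgame is defined purely by an inclusion between the underlying regulations together with an implication relating their sets of \emph{titere} rules, so the whole argument is set-theoretic bookkeeping.

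For reflexivity, fix an l-game $\Psi\in\Gamma$ with underlying regulation $G$. Then $G\subseteq G$ holds trivially, and trivially every \emph{titere} of $G$ is a \emph{titere} of $G$; both clauses of Definition~\ref{Dsubgame} are met, so $\Psi\sqsubseteq_{sub}\Psi$. For transitivity, suppose $\Psi_3\sqsubseteq_{sub}\Psi_2$ and $\Psi_2\sqsubseteq_{sub}\Psi_1$, and let $G_1,G_2,G_3$ be the respective underlying regulations. By definition $G_1\subseteq G_2$ and $G_2\subseteq G_3$, hence $G_1\subseteq G_3$ by transitivity of set inclusion. Moreover, every \emph{titere} of $G_3$ is a \emph{titere} of $G_2$, and every \emph{titere} of $G_2$ is a \emph{titere} of $G_1$; composing these two implications, every \emph{titere} of $G_3$ is a \emph{titere} of $G_1$. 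Thus $\Psi_3\sqsubseteq_{sub}\Psi_1$, which proves transitivity, and therefore $\sqsubseteq_{sub}$ is a preorder on $\Gamma$.

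There is no real obstacle here; the only points that deserve a moment's care are the direction of the inclusion (the l-subgame carries the \emph{larger} family of rules, so one must chain $G_1\subseteq G_2\subseteq G_3$ rather than the reverse) and the fact that the \emph{titere} clause is phrased relative to the different probability functions attached to $\Psi_1$, $\Psi_2$, $\Psi_3$ — but since in Definition~\ref{Dsubgame} this clause is simply the logical statement ``a \emph{titere} of the one is a \emph{titere} of the other'', and logical implications compose, the transitive step goes through verbatim. One could additionally remark that, because within $\Gamma$ an l-game is determined by its underlying regulation, $\sqsubseteq_{sub}$ is in fact antisymmetric and hence an order; however, the statement as posed only claims the preorder property, so the verification above suffices.
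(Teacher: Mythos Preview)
Your proof is correct and follows essentially the same approach as the paper's own argument: both verify reflexivity and transitivity directly from Definition~\ref{Dsubgame} by chaining the regulation inclusions $G_1\subseteq G_2\subseteq G_3$ and composing the \emph{titere} implications. Your write-up simply spells out the reflexivity step and the direction-checking a bit more explicitly, and adds the (accurate but not required) side remark on antisymmetry.
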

\begin{proof}

It is clear that $\sqsubseteq_{sub}$ is reflexive. Let's see that it is transitive. Suppose that $\Psi_3 \sqsubseteq_{sub} \Psi_2\sqsubseteq_{sub} \Psi_1$, then  $G_1\subseteq G_2\subseteq G_3$ and, in particular, $G_1\subseteq G_3$. On the other hand, any titere of $G_3$ is also a titere in $G_2$, as well as any  titere of $G_2$ is also a titere in $G_1$, thus, any titere of $G_3$ is also a titere in $G_1$.

Therefore, we conclude that  $\Psi_3\sqsubseteq_{sub} \Psi_1$, that is,  $\sqsubseteq_{sub} $ is transitive and, hence, it is a preorder.
\end{proof}











\begin{example}\label{Ecom}
Let $\{A_i\}_{i=1,2,3}$ be a set of three players, which are the herdsmans of the classical communal example. Each herdsman has a single cow, which need 90\$ for feed. The communal can feed two cows, but if the three of them try to feed there, then it is not enough and each herdsman has an additional cost of 50\$.

Initially there is no rule, so that all the players decide to bring their cow to the communal. Hence, the equlibrium state is $(-50, -50, -50)$. 
In order to improve this situation, the following rules are studied:

$R_0:$ No rule.

$R_1:$ A tax (33\$) to any user.

$R_2:$ A raffle (one cannot use it).

$R_3:$ A compensation (66\$ for any one who reject to use it).

\noindent Therefore, the Law is $\Omega=\{R_1,R_2,R_3\}$. Any combination of the aforementioned rules is also possible, hence, we have $2^3=8$ possible regulations, denoted by $G_i\subseteq \Omega$. Under each regulation a new equilibrium state may arise.

We may assume that the  punishment for breaching $R_1$ (i.e., for using the communal without paying the tax) is $g(R_1)=100\$ $, the  punishment for breaching $R_2$  is also $g(R_2)=100\$ $, and for $R_3$ is $g(R_3)=166\$ $. 

The theoretical punishment for breaching a law that is not included in the regulation is 0 (this assumption is needed for the construction of the distance).

Now, we estimate the probability of breaching each law. We may do that for each player or just as an average or mean. Here we did 
 for each player, assuming that one of them is very lawful, the second one no so much, and the third one is a cheat. With the probabilities assumed for each player, the corresponding mean (without weights, since we assume that all the players are equally important (one cow for each herdsman)) is reflected in the following table:

\begin{tabular}{lcccccccc}
Unfulfilled rules   & $A$ & $B$ & $C$ & $D$ & $E$ & $F$ & $G$ & $H$  \\
/ regulation  &   &  &  &  &  &  &  &     \\ \hline

$A=\{\emptyset\}$  & 1 & x & x & x & x & x & x & x   \\  \hline
$B=\{tax\}$  & 22/30 & 8/30 & x & x & x & x & x & x   \\  \hline
$C=\{raffle\}$  & 9/10 & x & 1/10 & x & x & x & x & x   \\  \hline
$D=\{comp.\}$  & 29/30 & x & x & 1/30 & x & x & x & x   \\  \hline
$E=\{tax, raffle\}$  & 82/90 & 5/90 & 0 & x & 3/90 & x & x & x   \\  \hline
$F=\{tax, comp.\}$  & 28/30 & 0 & x & 0 & x & 2/30 & x & x   \\  \hline 
$G=\{raffle, comp.\}$  & 87/90 & x & 0 & 2/90 & x & x & 1/90 & x   \\  \hline
$H=\{tax, raffle, comp.\}$  & 29/30 & 0 & 0 & 0 & 0 & 0 & 0 & 1/30   \\  \hline
\end{tabular}
\medskip

In the table above notice that the probability assigned to $\emptyset$ is the probability of breaching no rule. On the other hand, in some context the probability of breaching a single rule (or a subset of rules) is 0, since it implies to breach another one. For instance, under regulation  $F$, if a player breaks the compensation rule, it means that (s)he is using the communal as well as (s)he is not paying the tax (here we assume that it is buroucratically impossible to ask for compensation as well as pay the tax), thus, (s)he is not breaching the $compeansation$ neither the $tax$, but it is breaking the set of rules $\{tax, compensation\}$. In any case, although the probability of   breaching the single rule is 0, the corresponding punishment exists (at least theoretically, and it will be used when defining distances).

 With the these data, the distances between regulations (as defined in Definition~\ref{Ddis}) have been illustrated in the graph of Figure~\ref{Fgrafo}.
 
\end{example}

 \begin{center}
 
\begin{figure}[H]\label{Fgrafo}
\begin{tikzpicture}
  \graph[n=8,clockwise, radius=7cm,
         empty nodes, nodes={circle,draw}] {
   \foreach \x in {A,...,H} {
      \foreach \y in {\x,...,H} {
        \x -- \y;
      };
    };
    A --["\small{166/30}"] B;
    A --["\small{166/30}"] C;
   A --["{\color{red}{\small{332/30}}}"] D;
    A --["{\color{blue}{\small{80/9}}}", pos=0.6] E; 
    A --["{\color{red}{\small{166/30}}}", pos=0.5] F;
    A --["\small{10}", pos=0.45] G;
   A --["\small{88/10}"] H;
    B --["\small{166/30}"] C;
    B --["\small{166/30}", pos=0.47] D;
    B --["\small{166/30}", pos=0.51] E; 
    B --["{\color{red}{\small{166/30}}}", pos=0.4] F;
    B --["\small{166/30}",pos=0.55] G;
    B --["\small{166/30}", pos=0.45] H;
    C --["\small{1162/30}"] D;
    C --["\small{1298/90}",] E; 
    C --["\small{166/90}", pos=0.55] F;
    C --["{\color{blue}{\small{166/30}}}", pos=0.6] G;
    C --["\small{966/30}",pos=0.45] H;
    D --["\small{1796/90}"] E;
    D --["\small{332/30}", pos=0.45] F; 
    D --["\small{632/30}",pos=0.47] G;
    D --["{\color{red}{\small{332/30}}}", pos=0.55] H;
    E --["\small{1298/90}"] F; 
    E --["\small{170/9}",pos=0.51] G;
    E --["\small{10/3}"] H;
    F --["\small{466/30}"] G;
    F --["\small{966/30}", pos=0.47] H;
   G --["\small{110/3}"] H;
    
  };
  \foreach \x [count=\idx from 0] in {A,...,H} {
    \pgfmathparse{90 + \idx * (360 / 8)}
    \node at (\pgfmathresult:7.4cm) {\x};
  };
  \draw (A) edge[red, thick] node[black,left,pos=.2] {} (D);
  \draw (A) edge[red, thick] node[black,left,pos=.2] {} (F);
  \draw (B) edge[red, thick] node[black,left,pos=.2] {} (F);
  \draw (D) edge[red, thick] node[black,left,pos=.2] {} (H);
  \draw (A) edge[blue, thick] node[black,left,pos=.2] {} (E);
  \draw (C) edge[blue, thick] node[black,left,pos=.2] {} (G);
\end{tikzpicture}
\caption{The graph associated to all the possible regulations for the communal and the corresponding distances.}
\end{figure}
 \end{center}


\section{Applications: law modifications and no-agreement situations}\label{sapply}

Thanks to the mathematical approach to those questions related to regulations and deals, now the corresponding decisions may be chosen by answering optimization problems. In the present section some of these questions and solutions are given.

\subsection{First application: The shortest path, the legal gradiant}\label{sub1}
 In the present section we show how to change the rules of a game in order to achieve a more efficient equilibrium. There may be different manners or paths in order to change the law from a initial regulation to a final one,  but some of them may be better than others when we want to minimize the \emph{social resistence}  to the changes.

As it was shown in the section before, the set of all possible deals is a complete and  weighted and directed graph of $2^{|\Omega|}=|\mathcal{P}(\Omega)|$ nodes, where each node is a deal (i.e. a subset of $\Omega$) and each edge $(G_1,G_2)$ is weighted by the value determined by the function $D(G_2,G_1)$. If $D$ is symmetric, the the graph can be considered undirected.

Therefore, our political problem of legislating while minimizing social resistance is just the well-known problem of finding 
the shortest path  between two vertices   in a graph such that the sum of the weights   is minimized. \cite{graph}

\begin{example}\label{Ea1}
From Figure~\ref{Fgrafo} of Example~\ref{Ecom}, we may observe that, if we want to arrive to regulation $H$ by adding rules one by one from $A$, then any possible path  
 is longer (i.e. worst, since the conflict with respect to the previous situation is bigger) than the (shortest) path $(A, D, G, H)$, whose length is 12'91, whereas the length of the second shortest path $(A, C, E, H)$ is $34'422$ (and the third one would be $(A, B, G, H)$, with length $46'5$). Thus, with the assumed data (mainly, the probabilities and the punishment) the compensation must be the first rule to be introduced.

\end{example}






\subsection{Second application: Searching agreement and consensus}\label{sub3}

When making agreements between $N$ players, each player has a preference on the possible agreements that may be agreed. 
The distance presented may be a tool in order to achieve a deal, arriving to an `intermediate' deal between the preferences of the players.

Returning to the previous case illustrated in Example~\ref{Ea1}, we present the following simple case:

\begin{example}\label{Ea3}
From Figure~\ref{Fgrafo} of Example~\ref{Ecom}, we  may  observe that, for example, if in the region where the communal lies there are two main political parties, such that one of them wants to introduce the regulation $H$ for the communal, whereas the other prefers to continue with no regulation, then, the deal $D$ as well as $G$ could be the midpoint between these two regulations. 
\end{example}

Suppose that in a negotiation there are $N$ players that do not approach positions, and such that each player $n$ has a family of most preferred deals $\{G_{n_k}^{*}\}_{k=1,...,m_n}$ (arguing on the set  of all possible regulations on $\Omega$).  

In this blocking situation, we may wonder which is the minimum distance $r\in \mathbb{R}$
such that  $\bigcap\limits_{n=1}^N (\bigcup\limits_{k=1}^{m_n}B_r(G_{n_k}^* ))\neq \emptyset$,
 as well as it satisfies an \emph{optimal} condition with respect to the player's  preferences, in case they exist. 
Now we are just searching that deal which is as closest as possible to all the most preferred options. Once we know where that meeting point is, then we can apply the study presented in Subsection~\ref{sub1}   in order to make the path from the initial deal to that closest deal.


Now, we assume that each player $A_n$ has a preference $\precsim_n$ (for each $n=1,2,...,N$) on the set of all possible games $\{G\colon G\subseteq \Omega\}=\mathcal{P}(\Omega)$. We assume that each preference $\precsim_n$ is a preorder. The goal of the present section is to achieve an \emph{optimal} deal for disagreement situations. For that purpose, the following concept is defined.

\begin{definition}\rm

Let $\Omega$ be a finite set of possible rules and $A=\{A_n\}_1^N$ a finite set of players with their corresponding preferences $\precsim_n$ on $\mathcal{P}(\Omega)$ (for each $n=1,2,...,N$).  A deal $G^o\subseteq \Omega$ is said to be a \emph{Pareto optimal deal} if for any other deal $G$, $G^o\prec_m G$ implies $G\prec_n G^o$, with $n,m\in \{1,...,N\}$.
\end{definition}

Now, we introduce some interesting concepts for the study of negotiation problems.
\begin{definition}\rm
We shall say that a signer of a deal $G$ is a \emph{$r$-signer} (with respect to a metric $D$), with $r\geq 0$, if she agrees to sign any other deal $G'$ such that the distance from $G$ to $G'$ is smaller than $r$, that is, $D(G',G)<r$.
The value of $r$ is said to be the \emph{deal threshold of the signer}. 

A Game of games $\Gamma$   is said to be a \emph{$r$-step} Game of games if $D(\Psi_1,\Psi_2)\geq r$, for any $\Psi_1, \Psi_2\in \Gamma$.

Given a $r_1$-step Game of games, a $r_2$-signer is said to be \emph{stubborn} whenever $r_2<r_1$.
And it is said to be a \emph{boycotter} if $r_2=0$.   

\end{definition}

Thus, an sttuborn  player is not going to sign any other deal except his/her most preferred options. A boycotter is a player that sign nothing.
 
\begin{example}

In the case illustrated in Example~\ref{Ea1}, 
the length of each step in the  path $(A, D, G, H)$ is at most $166/30$, thus, any player with a deal-threshold bigger than this value would accept the deal. 

On the other hand, there is no path from $A$ to $H$ such that the length of each step is smaller than that value. Thus, consensus would be possible over time whenever all the player are $r$-signers,  with $r\geq 166/30$, for each step carried out at the time. 
\end{example}

The following concept introduces the idea of \emph{compatibility} between the agent's preference and the distance, in the sense that the  closer a l-game is to the maximal elements (with respect to the preference relation), the more preferred it is.

\begin{definition}\rm
Let $\Omega$ be a finite set of possible rules and $A=\{A_n\}_1^N$ a finite set of players with their corresponding preferences $\precsim_n$ on $\mathcal{P}(\Omega)$ (for each $n=1,2,...,N$). For each $n=1,...,N$, we denote by $M_n=\{ {G}_{n_1}^*,...,{G}_{n_{m_n}}^{*} \}$ the maximal elements of $\mathcal{P}(\mathcal{R})$ with respect to $\precsim_n$. Let $D$ be a distance function on $ \mathcal{P}(\Omega)$.
We say that the preferences and the measure $D$ are \emph{compatible} if for any maximal chain $C$ in $(\mathcal{P}(\Omega), \precsim_n)$ and any $G_1, G_2\in {C}$ 
the following condition holds true:
$$ G_1\precsim_n G_2\implies D(G_2, {G}_n^C)\leq D(G_1, {G}_n^C),$$
where ${G}_n^C$ is the maximal element of $C$ with respect to $\precsim_n$.



\end{definition}


\begin{remark}\label{Ralert}
Notice that given a preference $\precsim_n$ and a compatible distance $D$, it may hold that  
$\min\{D(G_{n_k}^*, y)\}_{k=1,...,m_n} < \min\{D(G_{n_k}^*, x)\}_{k=1,...,m_n}$ as well as $y\prec_n x$, for some $x,y\in \mathcal{P}(\Omega)$. 
\end{remark}

In order to achieve that Pareto optimal deal, we will use a compatible distance on the set of games, that is, a distance such that the player's preferences are lower semicontinuous. Anyway, even in case of no preferences, we may apply this method just arguing on distances, since we may assume that the player's preferences are empty and, hence, lower semicontinuous.

First, for each player's preference, we construct the corresponding compatible linear extension.

\begin{definition}\rm
Let $\Omega$ be a finite set of possible rules and $A=\{A_n\}_1^N$ a finite set of players with their corresponding preferences $\precsim_n$ on $\mathcal{P}(\Omega)$ (for each $n=1,2,...,N$).  Assume that these preferences are compatible with respect to a distance $D$.

The \emph{distance linear order} $\preceq_n^D$ for a player  $A_n$ is defined as follows (for any $n=1,...,N$):

$$ x\leq_n^D y \iff  \min\{D(G_{n_k}^*, y)\}_{k=1,...,m_n} \leq \min\{D(G_{n_k}^*, x)\}_{k=1,...,m_n}.   $$


\end{definition}
 The proof of the following proposition is straightforward.
\begin{proposition}
The distance linear order 
is a total preorder.
\end{proposition}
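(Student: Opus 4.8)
The plan is to recognize that $\preceq_n^D$ is precisely the total preorder obtained by pulling back the usual linear order on $\mathbb{R}$ along the real-valued function
$$ f_n \colon \mathcal{P}(\Omega) \to [0,+\infty), \qquad f_n(x) = \min\{D(G_{n_k}^*, x) \colon k = 1,\dots,m_n\}, $$
reading the inequality in the reversed direction prescribed by the definition (a deal lying closer to the set $M_n$ of $\precsim_n$-maximal elements is ranked higher by $\preceq_n^D$). First I would check that $f_n$ is well defined: since $\Omega$ is finite the index set $\{1,\dots,m_n\}$ is finite and nonempty (a finite poset has maximal elements, so $M_n \neq \varnothing$), and $D$ is nonnegative real-valued, hence the minimum of the finitely many numbers $D(G_{n_k}^*, x)$ exists and is a genuine element of $[0,+\infty)$ for every $x \in \mathcal{P}(\Omega)$.

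Then I would verify reflexivity, transitivity and totality directly from the equivalence $x \leq_n^D y \iff f_n(y) \leq f_n(x)$. Reflexivity is immediate because $f_n(x) \leq f_n(x)$. For transitivity, if $x \leq_n^D y$ and $y \leq_n^D z$ then $f_n(y) \leq f_n(x)$ and $f_n(z) \leq f_n(y)$, so transitivity of $\leq$ on $\mathbb{R}$ gives $f_n(z) \leq f_n(x)$, i.e.\ $x \leq_n^D z$. For totality, given any $x,y \in \mathcal{P}(\Omega)$, the linearity of $\leq$ on $\mathbb{R}$ forces $f_n(y) \leq f_n(x)$ or $f_n(x) \leq f_n(y)$, hence $x \leq_n^D y$ or $y \leq_n^D x$. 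Together these three facts are exactly the assertion that $\preceq_n^D$ is a total preorder.

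There is really no substantive obstacle here; the only points deserving a word of care are the well-definedness of the minimum (handled by finiteness of $\Omega$ and nonemptiness of $M_n$) and the harmless fact that the definition reverses the order of $\mathbb{R}$, which preserves reflexivity, transitivity and totality. It is also worth remarking that compatibility of the preferences with $D$ plays no role in this particular proof, and that $\preceq_n^D$ is in general not antisymmetric — two distinct deals at the same minimal distance from $M_n$ are $\leq_n^D$-equivalent — so it is a total preorder rather than a linear order in the literal sense, despite its name.
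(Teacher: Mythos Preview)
Your proof is correct and is exactly the natural way to unpack what the paper leaves implicit: the paper simply states that ``the proof of the following proposition is straightforward'' and gives no further argument, and your pullback-along-$f_n$ verification of reflexivity, transitivity and totality is precisely the routine check that justifies this claim. Your side remarks on well-definedness of the minimum and on the failure of antisymmetry are accurate and appropriate.
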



\begin{theorem}

Let $\Omega$ be a finite set of possible rules and $A=\{A_n\}_1^N$ a finite set of players with their corresponding preferences $\precsim_n$ on $\mathcal{P}(\Omega)$ (for each $n=1,2,...,N$). Let $D$ be a distance on $\mathcal{P}(\Omega)$. Let $M_n=\{ {G}_{n_1}^*,...,{G}_{n_{m_n}}^{*} \}$ be the maximal elements of $\mathcal{P}(\Omega)$ with respect to $\precsim_n$.

If $\precsim_n$ is $\tau_U$-lower semicontinuous with respect to the topology defined as 
$$\tau_U=\{ U_r=\displaystyle\bigcup\limits_{k=1}^{k=m_n} B_r({G}_{n_k}^*)\}_{r\geq 0}\cup \{\emptyset\}$$
then
\begin{enumerate}
\item 
 The distance linear order $<_n^D$ is a linear extension of $\prec_n$.
 \item
 The preference $\precsim_n$ and the distance $D$ are compatible.
\end{enumerate}
\end{theorem}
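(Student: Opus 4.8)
The plan is to first pin down exactly what the topology $\tau_U$ is, then read off what $\tau_U$-lower semicontinuity of $\precsim_n$ forces, and finally extract both conclusions from that. The key object is the function $f_n(x):=\min_{1\le k\le m_n}D(G_{n_k}^*,x)$, which is well defined because $M_n$ is finite. Since each ball $B_r(G_{n_k}^*)=\{x:D(G_{n_k}^*,x)<r\}$ grows with $r$, one has $U_r=\{x:f_n(x)<r\}$, and the family $\{U_r\}_{r\ge 0}$ is a chain of sets with $\bigcup_i U_{r_i}=U_{\sup_i r_i}$ and $U_r\cap U_s=U_{\min\{r,s\}}$; since $\mathcal P(\Omega)$ is finite these suprema are attained and $U_r=\mathcal P(\Omega)$ for large $r$, so $\tau_U$ really is a topology and its closed sets are precisely $\emptyset$ together with the superlevel sets $\{x:f_n(x)\ge r\}$, $r\ge 0$. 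This is the fact everything rests on.

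Next I would decode the hypothesis and prove part (1). By assumption each weak lower contour set $L_{\precsim_n}(a)=\{t:t\precsim_n a\}$ is $\tau_U$-closed, and it is nonempty since $a\precsim_n a$; hence $L_{\precsim_n}(a)=\{t:f_n(t)\ge\rho_a\}$ where $\rho_a:=\min\{f_n(t):t\precsim_n a\}$. Transitivity gives $x\precsim_n y\Rightarrow L_{\precsim_n}(x)\subseteq L_{\precsim_n}(y)$, hence, comparing the $f_n$-minimum of each side, $\rho_x\ge\rho_y$. Now take $x\prec_n y$: from $\neg(y\precsim_n x)$ we get $y\notin L_{\precsim_n}(x)$, i.e. $f_n(y)<\rho_x$, while $x\precsim_n x$ gives $f_n(x)\ge\rho_x$; therefore $f_n(y)<\rho_x\le f_n(x)$, which is by definition $x<_n^D y$. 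Combined with the earlier proposition that $\le_n^D$ is a total preorder, this is exactly conclusion (1): $<_n^D$ is a linear extension of $\prec_n$.

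For part (2) I would first show that the $\precsim_n$-greatest element $G_n^C$ of a maximal chain $C$ is a $\precsim_n$-maximal element of $\mathcal P(\Omega)$, hence lies in $M_n$: if $G_n^C\prec_n z$ for some $z$, transitivity puts $z$ strictly above every element of $C$ with $z\notin C$, so $C\cup\{z\}$ is a strictly larger chain, contradicting maximality of $C$. Thus $G_n^C=G_{n_j}^*$ for some $j$, so $f_n(c)\le D(G_n^C,c)$ for all $c$. For $G_1\precsim_n G_2$ in $C$, either $G_1=G_2$ (trivial) or $G_1\prec_n G_2$ since $C$ is a chain (the indifferent subcase handled via $L_{\precsim_n}(G_1)=L_{\precsim_n}(G_2)$), and the computation above then gives $f_n(G_2)<f_n(G_1)$; it remains to promote this to $D(G_2,G_n^C)\le D(G_1,G_n^C)$, that is, to verify that on the chain $C$ the minimum defining $f_n$ is realised at the chain's own top $G_n^C$, so that $f_n$ and $D(\cdot,G_n^C)$ agree along $C$.

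The routine parts are the topology computation and part (1); the hard part is that last line of part (2): bridging the global quantity $f_n=\min_k D(G_{n_k}^*,\cdot)$ and the local distance $D(\cdot,G_n^C)$ to the chain's own top, all while respecting that $D$ is only a premetric and not symmetric, so the order of its two arguments genuinely matters (compatibility is phrased with $D(\cdot,G_n^C)$, whereas $f_n$ is built from $D(G_{n_k}^*,\cdot)$). If $D$ is replaced by one of its symmetrisations $D^s$ or $D^+$ this matching is automatic; otherwise one must use the superlevel-set description of $L_{\precsim_n}$ together with the maximality of $G_n^C$ to identify which maximal element attains the minimum along $C$. A minor additional subtlety is the meaning of ``linear extension'' when $\precsim_n$ is not antisymmetric: the safe reading, and the one the argument above delivers, is that $<_n^D$ is a strict total preorder containing $\prec_n$; insisting in addition that $\le_n^D$ contain all of $\precsim_n$ can fail once an indifference class of $\precsim_n$ spreads across several $f_n$-levels.
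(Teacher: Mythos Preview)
Your argument for part~(1) is correct and matches the paper's: both use that $x\prec_n y$ places $y$ in the $\tau_U$-open complement of $L_{\precsim_n}(x)$, hence in some $U_r$ with $x\notin U_r$, forcing $\min_k D(G_{n_k}^*,y)<r\le\min_k D(G_{n_k}^*,x)$ and so $x<_n^D y$. Your packaging via the function $f_n$ and the identification of the $\tau_U$-closed sets as its superlevel sets is more explicit than the paper's phrasing, but the mechanism is identical.

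For part~(2) the routes genuinely differ. You reuse the $f_n$ machinery to get $f_n(G_2)\le f_n(G_1)$ and then, as you candidly say, are left with promoting this to the single-centre inequality $D(G_2,G_n^C)\le D(G_1,G_n^C)$. The paper does not attempt that promotion at all: it argues by contradiction directly on the one ball $B_r(G_n^C)$. Assuming $x\in B_r(G_n^C)$ and $y\notin B_r(G_n^C)$ for some $r$, it passes to the union $U_r$ (asserting $x\in U_r$ and $y\notin U_r$), sets $F=X\setminus U_r$, and uses lower semicontinuity together with the fact that the $\tau_U$-closed sets are nested to place $L_{\precsim_n}(y)\subseteq F$; then $x\in L_{\precsim_n}(y)\subseteq F$ gives $x\notin U_r\supseteq B_r(G_n^C)$, the desired contradiction. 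So the paper's line never needs to decide which maximal element realises the minimum defining $f_n$.

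Your closing caveats are on target and in fact bear on the paper's own proof: the inference $y\notin B_r(G_n^C)\Rightarrow y\notin U_r$ uses only one ball of the union and is not argued further when $m_n>1$, and the argument-order mismatch you flag (balls and $f_n$ are built from $D(G_{n_k}^*,\cdot)$, whereas compatibility is phrased with $D(\cdot,G_n^C)$) is not addressed in the paper either. Adopting the paper's direct-contradiction strategy removes your promotion problem but inherits these two points in its place; passing to a symmetrisation such as $D^s$, as you suggest, disposes of the second.
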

\begin{proof}

\noindent$(1)$
Let $x,y\in X=\mathcal{P}(\Omega)$ such that $x\prec_n y$. Since $\precsim_n$ is $\tau_U$-lower semicontinuous, $y\in U=X\setminus L_{\precsim_n}(x)\in \tau_U$, whereas $x\notin U$. Thus, there exists a radius $r>0$ such that  $y\in U_r=\displaystyle\bigcup\limits_{k=1}^{k=m_n} B_r(G_{n_k}^*)\subseteq U$ as well as $x\notin U_r$. Therefore,  $\min\{D(G_{n_k}^*, y)\}_{k=1,...,m_n} < r \leq \min\{D(G_{n_k}^*, x)\}_{k=1,...,m_n}$ and thus, $x<_n^D y. $
\medskip

\noindent$(2)$ 
Let $x,y\in X=\mathcal{P}(\Omega)$ be such that $x\prec_n y$ and  $G^*\in M_n$   the maximal element of a maximal chain $C$ containing $x$ and $y$. 
By reduction to the absurd, suppose that
there is $r>0$ such that $x\in  B_r(G^*)$ as well as $y \notin  B_r(G^*)$. Then, 
$x\in  \displaystyle\bigcup\limits_{k=1}^{k=m_n} B_r(\overline{G}_{n_k}^*)=U$ whereas $y \notin  \displaystyle\bigcup\limits_{k=1}^{k=m_n} B_r({G}_{n_k}^*)=U$. 
Thus, 
$F=X\setminus U$ is closed and $y\in X\setminus U $. Since the preorder $\precsim_n$ is $\tau_U$-lower semicontinuous, $L_{\precsim_n}(y)$ is contained in $F$ and, hence, $x\in L_{\precsim_n}(y) \subseteq F$. Therefore,  $x\notin \displaystyle\bigcup\limits_{k=1}^{k=m_n} B_r(\overline{G}_{n_k}^*)=U$ and hence, in particular $x\notin B_r(G^*)$ arrived to the desired contradiction.
\end{proof}

\begin{theorem}\label{Tmain}

Let $\Omega$ be a finite set of possible rules and $A=\{A_n\}_1^N$ a finite set of players with their corresponding preferences $\precsim_n$ on $\mathcal{P}(\Omega)$ (for each $n=1,2,...,N$).  Assume that these preferences 
are $\tau_U$-lower semicontinuous.
Let $\leq_n$ be the distance linear extension of $\precsim_n$, for each $n=1,...,N$.

Then, the maximal elements of $\sqsubseteq=\bigcap\limits_{n=1}^N \leq_n$ are all the Pareto optimal deals.
\end{theorem}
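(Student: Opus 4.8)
The plan is to reduce the statement to a standard fact about finite families of total preorders, using the preceding theorem as the main input. Under the $\tau_U$-lower semicontinuity hypothesis, that theorem tells us that each distance linear extension $\leq_n$ is a linear extension of $\precsim_n$; in particular $\prec_n\subseteq{<_n}$, so the passage from $\precsim_n$ to $\leq_n$ only turns non-comparabilities and indifferences into strict comparisons and never reverses a strict preference. The first thing I would do is record what maximality in $\sqsubseteq=\bigcap_{n=1}^N\leq_n$ means: since each $\leq_n$ is a total preorder and $\mathcal{P}(\Omega)$ is finite, $\sqsubseteq$ is a preorder whose strict part is $G^o\sqsubset G$ iff $G^o\leq_n G$ for every $n$ and $G^o<_{n_0}G$ for some $n_0$. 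Hence $G^o$ is a maximal element of $\sqsubseteq$ exactly when no deal $G$ is weakly at least as good as $G^o$ for every player and strictly better for some player with respect to the profile $(\leq_n)$, i.e. exactly when $G^o$ is Pareto optimal for $(\leq_n)$.

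What remains is to match Pareto optimality for $(\leq_n)$ with Pareto optimality for $(\precsim_n)$ in the sense of the definition. One inclusion is immediate: if some $G$ Pareto-dominates $G^o$ for $(\precsim_n)$ --- that is, $G^o\prec_m G$ for some $m$ while $\neg(G\prec_n G^o)$ for all $n$ --- then $G^o<_m G$ by the linear-extension property and, since $\leq_n$ refines $\precsim_n$, also $G^o\leq_n G$ for all $n$; thus $G^o\sqsubset G$ and $G^o$ is not maximal. So every maximal element of $\sqsubseteq$ is a Pareto optimal deal. For the converse --- every Pareto optimal deal is maximal --- one has to recover a $\precsim_n$-domination from a $\leq_n$-domination, and the issue is that a strict comparison $G^o<_{n_0}G$ produced by tie-breaking a $\precsim_{n_0}$-indifference (or resolving an incomparability) need not reflect $G^o\prec_{n_0}G$.

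This last point is where the real work lies, and where I would use the hypotheses in earnest. The key observation is that $\tau_U$-lower semicontinuity forces, for each $a$, the set $X\setminus L_{\precsim_n}(a)=\{t:\neg(t\precsim_n a)\}$ to coincide with one of the basic opens $U_r=\bigcup_k B_r(G^*_{n,k})$. From this one extracts first that $a\leq_n^D b\Rightarrow a\precsim_n b$ (so $\leq_n^D\subseteq\precsim_n$) and, taking $a$ to be a maximal element and using $D(G^*_{n,k},G^*_{n,k})=0$, that every element of $\mathcal{P}(\Omega)$ lies $\precsim_n$-below every element of $M_n$. Then, invoking the compatibility of $\precsim_n$ with $D$ (the preceding theorem) along a maximal chain through $a$, $b$ and the maximal element nearest to $a$, one obtains that $a\sim_n b$ implies $\min_k D(G^*_{n,k},a)=\min_k D(G^*_{n,k},b)$, i.e. $a\sim_n^D b$; combined with $\prec_n\subseteq{<_n^D}$ this gives $\leq_n=\leq_n^D=\precsim_n$, so each $\precsim_n$ is in fact a total preorder and no spurious tie-break occurs. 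Once this is in place, the converse inclusion is exactly the standard fact that the maximal elements of $\bigcap_{n=1}^N\precsim_n$ are the Pareto optimal elements of the profile $(\precsim_n)$ (for total preorders the definition of Pareto optimal deal unfolds to precisely this), and the theorem follows. I expect the identification $\leq_n=\precsim_n$ --- equivalently, showing that $\leq_n$ creates no Pareto-relevant comparisons absent from $\precsim_n$ --- to be the only step that needs genuine argument; everything else is bookkeeping with the definitions.
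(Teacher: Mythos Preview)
The paper's proof is considerably shorter and establishes only one direction: that every maximal element of $\sqsubseteq$ is a Pareto optimal deal. It proceeds directly rather than by contrapositive: given a maximal $M$ and any $x$ with $M\prec_n x$, it passes to $M<_n x$ via the linear-extension property and then asserts that maximality of $M$ forces some $k$ with $x\prec_k M$, since ``otherwise $M\sqsubset x$''. The converse inclusion, on which you spend most of your effort, is not addressed in the paper at all; your reading of ``are all the Pareto optimal deals'' as a set equality goes beyond what the paper actually proves.

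For the direction the paper does treat, the step you label ``immediate'' is exactly where the difficulty lies, and it is not immediate. From $\neg(G\prec_n G^o)$ you cannot conclude $G^o\leq_n G$ merely because ``$\leq_n$ refines $\precsim_n$'': the preceding theorem yields only $\prec_n\subseteq{<_n}$, which says nothing about how $\leq_n$ resolves $\precsim_n$-incomparabilities or indifferences, so $G<_n G^o$ is not excluded. (The paper's own ``otherwise $M\sqsubset x$'' is the mirror image of this step and is left equally bare.) Your final paragraph---arguing that $\tau_U$-lower semicontinuity forces $\leq_n^D\subseteq\precsim_n$ and hence $\precsim_n=\leq_n$---is the right kind of repair and, if it goes through, would close the gap in \emph{both} directions simultaneously; but you mislocate it as needed only for the converse, and the passage from compatibility along a single maximal chain to equality of the minima $\min_k D(G^*_{n_k},\cdot)$ is not fully justified (compatibility controls $D(\,\cdot\,,G^C)$ for the top $G^C$ of that one chain, the arguments of $D$ are in the opposite order from those defining $\leq_n^D$, and the premetric in the paper need not be symmetric).
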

\begin{proof}
Let $M$ be a maximal element of  $\bigcap\limits_{n=1}^N \leq_n$. 
Suppose that there is another element $x\in X=\mathcal{P}(\mathcal{R})$ such that $M\prec_n x$ for some $n\in \{1,...,N\}$. Since $<_n$ extends $\prec_n$, then it holds true that $M <_n x$.  On the other hand, $M $ is   a maximal element of  $\bigcap\limits_{n=1}^N \leq_n$, therefore, there is at least one index $k\in\{1,...,N\}$ such that $x\prec_k M$ (otherwise $M\sqsubset x$ and, hence, $M$ is not maximal, arriving to a contradiction). Thus, $M$ is a Pareto optimal deal. 
\end{proof}

Finally, according to  Theorem~\ref{Tmain}, in a disagreement situation where the preferences are $\tau_U$-lower semicontinuous, we may find the set of all Pareto optimal deals. Then, by means of the distance, we may choose that Pareto optimal deal which is also the closest from the most preferred deals, according to each player's preference. This motivates the following definition.

\begin{definition}\rm

Let $\Omega$ be a finite set of possible rules and $A=\{A_n\}_1^N$ a finite set of players with their corresponding preferences $\precsim_n$ on $\mathcal{P}(\Omega)$ (for each $n=1,2,...,N$).  Assume that these preferences are compatible with respect to a distance $D$.
Let $\leq_n$ be the distance linear extension of $\precsim_n$, for each $n=1,...,N$.

A maximal element $G^*$ of $\bigcap\limits_{n=1}^N \leq_n$ is the \emph{Closest Pareto optimal deal} if
$$\max\{D(G_{n_k}^*, G^*)\}_{k=1,...,m_n}\leq \max\{D(G_{n_k}^*, G)\}_{k=1,...,m_n},$$ for any other maximal element $G$  of $\bigcap\limits_{n=1}^N \leq_n$.
\end{definition}


\begin{remark}
In the study above, we have treated all players equally, symmetrically, regardless of their flexibility in reaching agreements. However, it can be approached asymmetrically, building balls of larger or smaller radius depending on the flexibility of each player. This and other questions are left for future works.
\end{remark}
\section{Conclusions}
The main goal of this work is to present a new and original breaking mathematical approach for agreement situations as well as for regulation modifications. The present paper presents the mathematical formalization of several concepts in the field of regulations, negotiations and agreements in the context of a society or a family of agents, as well as a mathematical structure that tries to capture the nature of some of these phenomena. As a consequence,   several problems related to agreements, negotiations, regulations,... may be studied and solved  just as    mathematical optimization problems.

This work is just a first new approach. It gives a  mathematical model for negotiations  and regulation constructions, as well as for comparison of societies. However, this model may  increase its complexity for other types of problems.

\end{document}